\newcommand{\ignore}[1]{}
\newtheorem{theodef}{Theorem/Definition}[section]
\newtheorem{theorem}{Theorem}[section]
\newtheorem{lemma}[theorem]{Lemma}
\newtheorem{corollary}[theorem]{Corollary}
\newtheorem{proposition}[theorem]{Proposition}
\theoremstyle{definition}
\theoremstyle{remark}
\newtheorem{remark}[theorem]{Remark}
\numberwithin{equation}{section}
\newcommand{\bN}{{\mathbb{N}}}
\newcommand{\hlt}{\mathrm {ht\, }}
\newcommand{\dm}{\mathrm {dim }}
\newcommand{\depth}{\mathrm {depth\, }}
\newcommand{\length}{\mathrm {length\, }}
\newcommand{\Ext}{\mbox{\rm{Ext}} }
\newcommand{\fM}{{\mathfrak m}}
\newcommand{\cM}{{\mathcal M}}
\newcommand{\la}{{\lambda}}
\newcommand{\lra}{{\longrightarrow}}
\begin{document}

\title[Lyubeznik table of sequentially Cohen-Macaulay rings]{Lyubeznik table of sequentially Cohen-Macaulay rings}

\author[J. \`Alvarez Montaner]{Josep \`Alvarez Montaner$^\dag$}
\thanks{$\dag$Partially supported by SGR2009-1284 and MTM2010-20279-C02-01}
\address{Dept. Matem\`atica Aplicada I\\
Univ. Polit\`ecnica de Catalunya\\ Av. Diagonal 647,
Barcelona 08028, SPAIN} \email{Josep.Alvarez@upc.edu}

\keywords {Lyubeznik numbers, Sequentially Cohen-Macaulay rings}

\subjclass[2010]{Primary 13D45, 13A35}

\begin{abstract}
We prove that sequentially Cohen-Macaulay rings in positive characteristic, as well as sequentially 
Cohen-Macaulay Stanley-Reisner rings in any characteristic, have trivial Lyubeznik table. Some other configurations 
of Lyubeznik tables are also provided depending on the deficiency modules of the ring.

\end{abstract}

\maketitle

\section{Introduction}

Let $(R,\fM)$  be a regular local ring containing a field $k$ and $I$ an ideal of $R$. Some finiteness properties
of local cohomology modules $H^r_I(R)$ where proved by 
C.~Huneke and R.~Y.~Sharp \cite{HS93}  when the field $k$ has positive characteristic 
and G.~Lyubeznik \cite{Ly93}  in the characteristic zero case (see also \cite{Ly00} for a characteristic-free approach). 
In particular, they proved that Bass numbers of these local cohomology modules are finite.  
Relying on this fact, G.~Lyubeznik \cite{Ly93} introduced a set of numerical invariants of local rings containing a field as follows:

\begin{theodef}
Let $A$ be a local ring containing a field $k$, so that its completion $\widehat{A}$ admits a surjective
ring homomorphism $\xymatrix@1{R\ar@{->>}[r]^-{\pi}& \widehat{A}}$ from a regular local ring
$(R,\mathfrak{m},k)$ of dimension $n$ and set $I:=\ker (\pi)$. Then, the Bass numbers\footnote{The last equality follows from \cite [Lemma 1.4]{Ly93}.}
\[
\lambda_{p,i} (A):=\mu_p (\mathfrak{m},H_I^{n-i}(R))=\mu_0 (\mathfrak{m},H_{\mathfrak{m}}^p (H_I^{n-i}(R)))
\]
depend only on $A$, $i$ and $p$, but neither on $R$ nor on $\pi$.
\end{theodef}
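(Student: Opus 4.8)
The plan is to prove the one substantive assertion, that $\lambda_{p,i}$ is independent of the presentation; the two displayed formulas are equal by \cite[Lemma~1.4]{Ly93}, which applies because the finiteness theorems of Huneke--Sharp and of Lyubeznik guarantee that $H_{\mathfrak{m}}^p(H_I^{n-i}(R))$ is a finite direct sum of copies of $E_R(k)$, the number of them being $\mu_p(\mathfrak{m},H_I^{n-i}(R))$. Granting this, the first reduction is to the case where $R$ is complete: $\pi$ extends to a surjection $\widehat{R}\to\widehat{A}$ with kernel $I\widehat{R}$, flat base change gives $H_I^{n-i}(R)\otimes_R\widehat{R}\cong H_{I\widehat{R}}^{n-i}(\widehat{R})$, and the Bass number at the maximal ideal is unchanged under completion; so we may assume $R$ is a complete regular local ring and $\widehat{A}=R/I$.

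The heart of the matter is an \emph{adjunction of variables} lemma. Set $R':=R[[y_1,\dots,y_t]]$, $n':=\dim R'=n+t$, and let $\pi'\colon R'\to\widehat{A}$ be the composite of the projection $R'\to R$ with $\pi$, so that $I':=\ker\pi'=IR'+(y_1,\dots,y_t)R'$; the claim is that $\mu_p(\mathfrak{m}',H_{I'}^{n'-i}(R'))=\mu_p(\mathfrak{m},H_I^{n-i}(R))$ for all $p$ and $i$. Since $V(I')=V(IR')\cap V((y_1,\dots,y_t)R')$, one has $\Gamma_{I'}=\Gamma_{IR'}\circ\Gamma_{(y)R'}$ as left exact functors, hence a Grothendieck spectral sequence $H_{IR'}^p(H_{(y)R'}^q(R'))\Rightarrow H_{I'}^{p+q}(R')$. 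Because $y_1,\dots,y_t$ is a regular sequence, $H_{(y)R'}^q(R')=0$ for $q\neq t$, and $W:=H_{(y)R'}^t(R')$ is free over $R$ (on the monomials $y_1^{-a_1}\cdots y_t^{-a_t}$ with all $a_j\geq1$); since $I$ is generated by elements of $R$, the surviving row of the $E_2$-page is $H_{IR'}^p(W)\cong W\otimes_R H_I^p(R)$, so the spectral sequence degenerates to $H_{I'}^{p+t}(R')\cong W\otimes_R H_I^p(R)$, and with $p=n-i$ this reads $H_{I'}^{n'-i}(R')\cong W\otimes_R H_I^{n-i}(R)$. It remains to prove $\mu_p(\mathfrak{m}',W\otimes_R M)=\mu_p(\mathfrak{m},M)$ for every $R$-module $M$: the sequence $x_1,\dots,x_n,y_1,\dots,y_t$ is a regular system of parameters of $R'$, so $\mathrm{Ext}_{R'}^p(k,W\otimes_R M)$ is the cohomology of the Koszul complex on it applied to $W\otimes_R M$, and the Koszul complex on $y_1,\dots,y_t$ applied to $W$ is quasi-isomorphic to $R$ (with trivial action of the $y_j$) placed in degree $0$, which identifies that cohomology with $\mathrm{Ext}_R^p(k,M)$. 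I expect the identification of $H_{I'}^{n'-i}(R')$ with $W\otimes_R H_I^{n-i}(R)$, and especially the matching of Bass numbers through $W$, to be the main technical point; the remaining steps are formal.

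To conclude, let $\pi\colon R\to\widehat{A}$ be an arbitrary complete presentation. Pick $g_1,\dots,g_c\in I\cap\mathfrak{m}_R$ whose classes form a basis of the image of $I$ in $\mathfrak{m}_R/\mathfrak{m}_R^2$; these extend to a regular system of parameters of $R$, so $R_0:=R/(g_1,\dots,g_c)$ is regular local, and choosing a coefficient field of $R$ first one obtains an isomorphism $R\cong R_0[[z_1,\dots,z_c]]$ carrying the quotient map $R\to R_0$ to the projection $z_j\mapsto 0$ and carrying $\pi$ to the composite of that projection with the induced surjection $\pi_0\colon R_0\to\widehat{A}$. By the choice of the $g_j$ one has $\ker\pi_0\subseteq\mathfrak{m}_{R_0}^2$, so $\pi_0$ is a minimal Cohen presentation of $\widehat{A}$, and the lemma gives $\lambda_{p,i}(R,\pi)=\lambda_{p,i}(R_0,\pi_0)$. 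Finally, any two minimal Cohen presentations of $\widehat{A}$ are linked by an isomorphism of their source rings compatible with the surjections onto $\widehat{A}$ (Cohen's structure theorem), so $\lambda_{p,i}(R_0,\pi_0)$ depends only on $\widehat{A}$, $p$ and $i$; hence so does $\lambda_{p,i}(R,\pi)$, which is the assertion.
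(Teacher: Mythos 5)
This Theorem/Definition is not proved in the paper; it is Lyubeznik's foundational theorem, cited from \cite{Ly93}. So the comparison is with Lyubeznik's own argument. Your proof is essentially correct and shares its engine with Lyubeznik's: the reduction to the complete case by flat base change, and above all the ``adjunction of variables'' lemma, whose content is that for $R' = R[[y_1,\dots,y_t]]$ and $I' = IR' + (y)$ one has $H^{j+t}_{I'}(R')\cong H^t_{(y)}(R')\otimes_R H^j_I(R)$ and Bass numbers over $\mathfrak{m}'$ of $W\otimes_R M$ agree with those of $M$ over $\mathfrak{m}$. Your derivation of that lemma via the Grothendieck spectral sequence for the composition $\Gamma_{IR'}\circ\Gamma_{(y)R'}$, the freeness of $W = H^t_{(y)}(R')$ over $R$, and the Koszul computation of $\mathrm{Ext}_{R'}^\bullet(k,\,W\otimes_R M)$ is sound (the Koszul subcomplex on the $y$'s applied to $W$ is a bounded complex of $R$-free modules with cohomology $R$ concentrated in degree zero, so tensoring with $M$ and then with the Koszul cocomplex on the $x$'s collapses to $\mathrm{Ext}^\bullet_R(k,M)$).

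Where you diverge from Lyubeznik is the final step. Lyubeznik compares two arbitrary presentations $\pi'\colon R'\to\widehat{A}$ and $\pi''\colon R''\to\widehat{A}$ symmetrically, by forming the completed tensor product $R = R'\,\widehat{\otimes}_k\,R''$ over a coefficient field $k$ of $\widehat{A}$ and showing, after a change of variables eliminating the graph of a lift, that $R\cong R'[[z]]$ with $\ker(R\to\widehat{A}) = (\ker\pi')R + (z)$, and likewise with $R''$; the adjunction lemma then equates all three sets of Bass numbers. You instead reduce each presentation to a minimal Cohen presentation ($\ker\subseteq\mathfrak{m}^2$) by splitting off the redundant variables $g_1,\dots,g_c$, and invoke uniqueness of minimal Cohen presentations up to an isomorphism over $\widehat{A}$. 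Both routes buy the same theorem. Lyubeznik's route is more symmetric and keeps the coefficient-field bookkeeping in one place (choosing a single coefficient field of $\widehat{A}$ and lifting it to both $R'$ and $R''$); yours isolates a cleaner intermediate object, the minimal presentation, at the cost of having to justify its uniqueness-with-compatible-isomorphism. That uniqueness is true, but ``Cohen's structure theorem'' in the bare form (every complete regular local ring containing a field is a power series ring over a coefficient field) gives only an abstract isomorphism $R_0\cong R_1$ of the sources; to get $\phi$ with $\pi_1\circ\phi=\pi_0$ you must also lift a chosen coefficient field of $\widehat{A}$ through $\pi_1$, which rests on formal smoothness of the residue field over its prime field. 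This is a genuine step you should make explicit or cite; it is the same technical point that Lyubeznik must address when he forms $R'\,\widehat{\otimes}_k\,R''$, and it is not a gap in substance, but as written the sentence ``(Cohen's structure theorem)'' overstates what the bare structure theorem provides.
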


We refer to these invariants as \emph{Lyubeznik numbers} and they are known to
 satisfy the following properties:

\begin{itemize}
\item[(i)]     $\la_{p,i}(A)=0$ if $i>d$.
\item[(ii)]    $\la_{p,i}(A)=0$ if $p>i$.
\item[(iii)]   $\la_{d,d}(A)\neq 0$.
\end{itemize}

\noindent where $d=\dm A$. Therefore we can collect them in
the so-called {\it Lyubeznik table}:
$$\Lambda(A)  = \left(
                    \begin{array}{ccc}
                      \la_{0,0} & \cdots & \la_{0,d}  \\
                       & \ddots & \vdots \\
                       &  & \la_{d,d} \\
                    \end{array}
                  \right)
$$

Despite its algebraic nature, Lyubeznik numbers also provide some geometrical and topological information
as it was already pointed out in \cite{Ly93}. For instance, in the case of isolated singularities, Lyubeznik numbers can 
be described in terms of certain singular cohomology groups in characteristic zero (see \cite{GS98}) or 
\'etale cohomology groups in positive characteristic (see \cite{BlBo}, \cite{Bl3}). 
The highest Lyubeznik number $\la_{d,d}(A)$ can be described using the so-called Hochster-Huneke
graph as it has been proved in \cite{Lyu06}, \cite{Zha07}. However very little is known about 
the possible configurations of Lyubeznik tables except for low dimension cases \cite{Kaw00}, 
\cite{Wa2} or the just mentioned case of isolated singularities.

\vskip 2mm

Local cohomology modules have a natural structure over the ring of $k$-linear differential
operators $D_{R|k}$ (see \cite{Ly93}, \cite{Lyu97}). In fact they are {\it holonomic} $D_{R|k}$-modules
so they have finite length \cite[Thm. 2.7.13]{Bj79}. One may check that Lyubeznik numbers 
are nothing but the length as $D_{R|k}$-module of the local cohomology modules  $H_{\mathfrak{m}}^p (H_I^{n-i}(R))$,
i.e. $$\la_{p,i}(A)= \length_{D_{R|k}}(H_{\mathfrak{m}}^p (H_I^{n-i}(R))).$$ 
From now on we will denote the  $D_{R|k}$-module length simply as $e(-)$.
A key fact that we will use is that the $D_{R|k}$-module length
is an additive function, i.e given a short exact sequence of holonomic $D_{R|k}$-modules $0\lra M_1 \lra M_2 \lra M_3\lra 0$  
we have $$e(M_2)=e(M_1)+e(M_3).$$

The following property of Lyubeznik numbers will play a crucial role in our main result. It was shown to us by R.~Garc\'ia L\'opez in
a graduate course \cite{GL} but we will sketch the proof for the sake of completeness:

\begin{proposition} Lyubeznik numbers satisfy the following  {\it Euler characteristic formula:}
 $$\sum_{0\leq p,i \leq d} (-1)^{p-i} \la_{p,i}(A)=1.$$

\end{proposition}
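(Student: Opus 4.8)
The plan is to compute the alternating sum $\sum_{p,i}(-1)^{p-i}\lambda_{p,i}(A)$ by exploiting the additivity of the $D_{R|k}$-module length $e(-)$ together with the two spectral sequences (or iterated long exact sequences) that relate the modules $H^p_{\mathfrak m}(H^{n-i}_I(R))$ to something whose total length is easy to identify. Concretely, recall $\lambda_{p,i}(A)=e\big(H^p_{\mathfrak m}(H^{n-i}_I(R))\big)$. Fix $j:=n-i$ and consider the Grothendieck spectral sequence $E_2^{p,j}=H^p_{\mathfrak m}(H^j_I(R))\Rightarrow H^{p+j}_{\mathfrak m}(R)$. Since $R$ is regular local of dimension $n$, the abutment is $H^{p+j}_{\mathfrak m}(R)=0$ unless $p+j=n$, in which case $H^n_{\mathfrak m}(R)=E_R(R/\mathfrak m)$, which is a simple $D_{R|k}$-module, so $e(H^n_{\mathfrak m}(R))=1$.

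Next I would pass to Euler characteristics. Because $e$ is additive on short exact sequences of holonomic $D_{R|k}$-modules, it is additive on the pages of the spectral sequence: for each fixed total degree the alternating sum of lengths of the $E_2$-terms equals the alternating sum of lengths of the $E_\infty$-terms, and the latter equals the length of the abutment in that total degree. Summing over all total degrees with the sign $(-1)^{p+j}$, we get
\[
\sum_{p,j}(-1)^{p+j}\, e\big(H^p_{\mathfrak m}(H^j_I(R))\big)=\sum_{t}(-1)^t\, e\big(H^t_{\mathfrak m}(R)\big)=(-1)^n\cdot 1=(-1)^n.
\]
Now substitute $j=n-i$, so $(-1)^{p+j}=(-1)^{p+n-i}=(-1)^n(-1)^{p-i}$, and $e(H^p_{\mathfrak m}(H^{n-i}_I(R)))=\lambda_{p,i}(A)$. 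This yields $(-1)^n\sum_{p,i}(-1)^{p-i}\lambda_{p,i}(A)=(-1)^n$, hence $\sum_{p,i}(-1)^{p-i}\lambda_{p,i}(A)=1$, as claimed. One should also note the sum is finite by properties (i) and (ii), so no convergence issue arises.

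The main obstacle is justifying that Euler characteristics are preserved along the spectral sequence in this $D$-module-length setting: one must check that all entries $E_r^{p,q}$ on every page are holonomic $D_{R|k}$-modules (so that $e$ is defined and finite on them), and that the differentials are $D_{R|k}$-linear. Holonomicity of $H^j_I(R)$ and of the iterated local cohomology $H^p_{\mathfrak m}(H^j_I(R))$ is exactly the Lyubeznik finiteness input already recalled in the excerpt, and the subquotients appearing on later pages are then automatically holonomic; $D_{R|k}$-linearity of the connecting maps follows because the whole spectral sequence is one of $D_{R|k}$-modules (local cohomology functors are defined via Čech/Koszul complexes of $R$, hence $D_{R|k}$-equivariant). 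Granting this, the argument above is just the standard fact that an additive invariant on an abelian category yields $\sum(-1)^{p+q}[E_2^{p,q}]=\sum(-1)^n[H^n(\text{abutment})]$. A clean alternative, avoiding spectral-sequence bookkeeping, is to induct using the hypercohomology long exact sequences coming from the Čech complex computing $H^\bullet_I(R)$ and then $H^\bullet_{\mathfrak m}(-)$ of it; either way the content is additivity of $e$ plus the vanishing of $H^t_{\mathfrak m}(R)$ for $t\neq n$.
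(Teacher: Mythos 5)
Your proof is correct and follows essentially the same route as the paper: both apply the Grothendieck spectral sequence $E_2^{p,n-i}=H^p_{\mathfrak m}(H^{n-i}_I(R))\Rightarrow H^{p+n-i}_{\mathfrak m}(R)$, use additivity of the $D_{R|k}$-module length to equate Euler characteristics of the $E_2$-page and the abutment, and then use $e(H^n_{\mathfrak m}(R))=1$ (and vanishing in other degrees) to extract the formula. The extra remarks you include about holonomicity on later pages and $D_{R|k}$-linearity of the differentials are sound justifications that the paper leaves implicit.
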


\begin{proof}
Consider Grothendieck's spectral sequence $$E_2^{p,n-i}= H_{\fM}^p(H_I^{n-i}(R))\Longrightarrow H_{\fM}^{p+n-i}(R).$$
We define the Euler characteristic of the $E_2$-term with respect to the  $D_{R|k}$-module length $e$ as
$$\chi_e(E_2^{\bullet,\bullet})=\sum_{p,i} (-1)^{p+n-i} e(E_2^{p,n-i}).$$ We can also define the Euler characteristic
of the graded $R$-module $H^{\bullet}_{\fM}(R)$ as $$\chi_e(H^{\bullet}_{\fM}(R))=\sum_j (-1)^{j} e(H^{j}_{\fM}(R)).$$
It is a general fact of the theory of spectral sequences that $\chi_e(E_2^{\bullet,\bullet})=\chi_e(H^{\bullet}_{\fM}(R))$
due to the additivity of the $D_{R|k}$-module length.

\vskip 2mm

Therefore, since  $e(E_2^{p,n-i})= e(H_{\fM}^p(H_I^{n-i}(R)))=\la_{p,i}(A)$
and  $e(H_{\fM}^{n}(R))= 1$ we get  $$\chi_e(E_2^{\bullet,\bullet})=\sum_{0\leq p,i \leq d} (-1)^{p+ n-i} \la_{p,i}(A)=(-1)^n=\chi_e(H_{\fM}^{\bullet}(R))$$ and the result follows.
\end{proof}

The first example one may think of Lyubeznik tables is when there is only one local cohomology
module different from zero. Indeed, assume that $H^r_I(R)=0 $ for all $r\neq \hlt I$.
Then, using Grothendieck's spectral sequence  we obtain a
{\it trivial Lyubeznik table}.
$$\Lambda(R/I)  = \left(
                    \begin{array}{ccc}
                      0 & \cdots & 0  \\
                       & \ddots & \vdots \\
                       &  & 1 \\
                    \end{array}
                  \right)
$$
This situation is achieved, among others, in the following cases:
\begin{itemize}
\item[$\cdot$] $R/I$ is Cohen-Macaulay and contains a field of positive characteristic.
\item[$\cdot$] $R/I$ is Cohen-Macaulay and $I$ is a squarefree monomial ideal in any characteristic.
\end{itemize}

\begin{remark}
When $R/I$ is Cohen-Macaulay containing a field of characteristic zero the previous result is no longer true.
For example, consider the ideal generated 
by the $2\times 2$ minors of a generic $2\times 3$ matrix. 
Its Lyubeznik table was computed in \cite{AL06}:
$$\Lambda(R/I)=\begin{pmatrix}
  0 & 0 & 0 & 1 & 0 \\
   & 0 & 0& 0 & 0 \\
   &  & 0 & 0 & 1 \\
   &  &  & 0 & 0 \\
   &  &  &  & 1
\end{pmatrix}$$ We point out that K.~I.~Kawasaki already proved in \cite{Kaw02}  that the highest Lyubeznik number $\lambda_{d,d}$ 
of a Cohen-Macaulay ring (or even $S_2$) is always one.
\end{remark}

\vskip 2mm

The main result of this note is Theorem \ref{main} where we prove that the previous result still holds true replacing the Cohen-Macaulay
property for sequentially Cohen-Macaulay, in particular, assuming that we may have more than one local cohomology
module different from zero. For example, consider the ideal $I=(x_1,x_3)\cap (x_2,x_3)\cap (x_4)$  in $R=k[\![x_1,x_2,x_3,x_4]\!]$. The local cohomology
modules $H^1_I(R)$ and $H^2_I(R)$ are different from zero so $R/I$ is not Cohen-Macaulay but it is sequentially Cohen-Macaulay since
it corresponds to a simplicial tree (see \cite{Far} for details).

\vskip 2mm

In the spirit of \cite{SW}, we give a unified proof of both
cases using the theory of modules over skew-polynomial rings.
We point out that the case of squarefree monomial ideals is already treated in a joint work with
K.~Yanagawa \cite{AY13} using the description of Lyubeznik numbers of squarefree monomial ideals in
terms of the linear strands of the Alexander dual ideal given in \cite{AV11}. Finally, in the last section, we use 
the same techniques to provide some configurations of Lyubeznik tables depending on the 
so-called {\it deficiency modules} $$K^i(R/I):=\Ext^{n-i}_R(R/I,R).$$

\vskip 2mm 

Sequentially Cohen-Macaulay modules were introduced by R.~Stanley \cite{Sta96} in the graded case but
extended later on to the local case. We present here a homological characterization, due to C.~Peskine (see \cite{HS02})
in the graded case and  P.~Schenzel \cite{Sch99} in the local case
(see also \cite{CN03}), that will be useful for our purposes.
We decided to consider just the case of
regular local rings to keep the same framework as in the rest of the paper.

\begin{theodef}
 Let $(R,\fM)$ be a regular local ring of dimension $n$. Then, an $R$-module $M$ is sequentially Cohen-Macaulay
if and only if for all $0\leq i \leq \dim M$ we have that $\Ext^{n-i}_R(M,R)$ is zero or Cohen-Macaulay of dimension $i$.

\end{theodef}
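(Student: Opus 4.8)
Recall that $M$ is sequentially Cohen-Macaulay precisely when its \emph{dimension filtration} $0=M_{-1}\subseteq M_0\subseteq\cdots\subseteq M_d=M$ --- where $M_j$ is the largest submodule of $M$ of dimension $\le j$ --- has every quotient $M_j/M_{j-1}$ equal to $0$ or Cohen-Macaulay of dimension $j$. The plan is to translate the $\Ext$-condition into local cohomology via local duality and to match it against this filtration by induction on $\dm M-\depth M$. First I would assume $R$ complete, which is harmless since neither sequential Cohen-Macaulayness of $M$ nor the Cohen-Macaulayness and dimension of any $\Ext^{n-j}_R(M,R)$ changes under $\fM$-adic completion. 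Writing $K^i(M):=\Ext^{n-i}_R(M,R)$, local duality over the Gorenstein ring $R$ (with $\omega_R\cong R$ and $\dm R=n$) gives $K^i(M)\cong H^i_{\fM}(M)^{\vee}$, the Matlis dual; I will freely use that $\dm K^i(M)\le i$, that $K^i(M)=0$ for $i<\depth M$ and for $i>\dm M$ while $K^{\dm M}(M)\ne0$, and that $M$ is Cohen-Macaulay of dimension $d$ iff $K^i(M)=0$ for all $i\ne d$, in which case $K^d(M)=\omega_M$ is the canonical module of $M$, again Cohen-Macaulay of dimension $d$. The base case $\dm M=\depth M$ of the induction is the Cohen-Macaulay case, and by this last fact both implications hold there.

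For ``$\Rightarrow$'', assume $M$ is sequentially Cohen-Macaulay but not Cohen-Macaulay and put $s:=\depth M$. Then $N:=M_s$ is Cohen-Macaulay of dimension $s$, while $M'':=M/M_s$ is again sequentially Cohen-Macaulay (its dimension filtration being the image of that of $M$), with $\depth M''>s$ and $\dm M''\le\dm M$, so $\dm M''-\depth M''<\dm M-\depth M$. By induction $K^i(M'')$ is $0$ or Cohen-Macaulay of dimension $i$ for every $i$, and since $\depth M''>s$ we have $K^i(M'')=0$ for $i\le s$. Applying $\Hom_R(-,R)$ to $0\to N\to M\to M''\to0$ produces the long exact sequence $\cdots\to K^{i+1}(N)\to K^i(M'')\to K^i(M)\to K^i(N)\to K^{i-1}(M'')\to\cdots$; since $K^\bullet(N)$ lives only in index $s$ while $K^\bullet(M'')$ vanishes in indices $\le s$, both connecting homomorphisms have zero source or zero target. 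Hence the sequence breaks into $0\to K^i(M'')\to K^i(M)\to K^i(N)\to0$ for all $i$, giving $K^i(M)\cong K^i(M'')$ for $i\ne s$ and $K^s(M)\cong K^s(N)=\omega_N$; the $\Ext$-condition for $M$ follows.

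For ``$\Leftarrow$'', assume every $K^i(M)$ is $0$ or Cohen-Macaulay of dimension $i$; we may also assume $M$ is not Cohen-Macaulay. Set $s:=\min\{i:K^i(M)\ne0\}$, so $\depth M=s$ by the facts listed above, and let $U:=M_s$. The aim is to prove, for the sequence $0\to U\to M\to M/U\to0$, that $U$ is Cohen-Macaulay of dimension exactly $s$, that $H^i_{\fM}(M/U)=0$ for $i\le s$, and that $H^i_{\fM}(M/U)\cong H^i_{\fM}(M)$ for $i>s$. Granting this, $K^\bullet(M/U)$ satisfies the same hypothesis with $\depth(M/U)>s$, so $\dm(M/U)-\depth(M/U)<\dm M-\depth M$, and by induction $M/U$ is sequentially Cohen-Macaulay; prepending $U$ then exhibits $M$ as sequentially Cohen-Macaulay. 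When $s=0$ everything is immediate: $U=H^0_{\fM}(M)$ has finite length and the long exact local cohomology sequence of $0\to U\to M\to M/U\to0$ gives $H^0_{\fM}(M/U)=0$ and $H^i_{\fM}(M/U)\cong H^i_{\fM}(M)$ for $i\ge1$.

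The step I expect to be the real obstacle is this last one for $s\ge1$: manufacturing an honest submodule filtration from a purely cohomological hypothesis. Concretely, one has to show that $0\to U\to M\to M/U\to0$ again splits the long exact $\Ext$-sequence into $0\to K^i(M/U)\to K^i(M)\to K^i(U)\to0$, which in turn forces $U$ to be unmixed and Cohen-Macaulay of dimension exactly $s$ and $H^i_{\fM}(M/U)$ to vanish for $i\le s$. While $\dm U\le s$ is automatic from $U=M_s$, the Cohen-Macaulayness of $U$ and these vanishings cannot be read off the dimension filtration alone; they are forced by the full hypothesis that \emph{every} $K^j(M)$ is Cohen-Macaulay of the expected dimension --- in particular that $K^s(M)$ has full dimension $s$, which is what excludes lower-dimensional components at the bottom of $M$ --- through a careful analysis of how the deficiency modules interact with the dimension filtration. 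That analysis is precisely the content of Schenzel \cite{Sch99} (see also \cite{HS02}, \cite{CN03}) and can be imported here essentially verbatim; with it in hand, both implications reduce to the two inductions above.
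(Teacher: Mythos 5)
The paper does not prove this statement at all: it is presented as a known characterization (Theorem/Definition) and the text explicitly attributes the proof to Peskine (via Herzog--Sbarra \cite{HS02}), Schenzel \cite{Sch99}, and Cuong--Nhan \cite{CN03}. There is therefore no ``paper's own proof'' against which to measure you line by line; what you have written is a reconstruction of the standard argument, so I will evaluate it on those terms.

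Your sketch is sound and follows the route the cited sources take: pass to the completion so that $R$ is complete regular (hence Gorenstein with $\omega_R\cong R$), identify $K^i(M)=\Ext^{n-i}_R(M,R)$ with the Matlis dual of $H^i_\fM(M)$, and induct on $\dm M-\depth M$, peeling off the bottom piece $M_s$ of the dimension filtration. The forward implication is essentially complete: once one knows $M_s$ is Cohen--Macaulay of dimension $s=\depth M$ and $M/M_s$ is sequentially Cohen--Macaulay with $\depth(M/M_s)>s$, the long exact $\Ext$-sequence for $0\to M_s\to M\to M/M_s\to 0$ really does split into short exact sequences exactly because $K^\bullet(M_s)$ is concentrated in degree $s$ while $K^\bullet(M/M_s)$ lives only in degrees $>s$; your bookkeeping of the connecting maps is correct, and the induction hypothesis applies since $\dm(M/M_s)-\depth(M/M_s)<\dm M-\depth M$.

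For the converse you correctly isolate the real difficulty: from the hypothesis on the $K^i(M)$ alone one has to produce the bottom step of the dimension filtration as a genuine Cohen--Macaulay submodule of dimension \emph{exactly} $s$ (not less), and deduce the vanishing $H^i_\fM(M/M_s)=0$ for $i\le s$ so that the induction can proceed. You defer that step to Schenzel, which is honest and, given that the paper itself cites the same sources without proof, entirely appropriate in this context. If you wanted to make the sketch self-contained, that is the one place where you would need to actually import Schenzel's analysis of how the deficiency modules $K^i(M)$ interact with the filtration submodules $M_j$ (for instance via the identification of $M_j$ with a suitable $H^0_{\fa}(M)$ and the spectral-sequence or double-dual arguments in \cite{Sch99}); as written the proposal is a correct outline with that single, clearly flagged, gap.
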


In our situation we will be interested in the case when the $R$-module $M$ is just the local ring 
$ R/I$ for any given ideal $I\subseteq R$.
We also point out that throughout this work we will freely use some standard facts about local cohomology modules.
We refer to \cite{BS98} for any unexplained terminology.

\section{Finitely generated unit $R[\Theta; \varphi]$-modules}
Let $(R,\fM)$ be a regular local ring of dimension $n$ containing a
field $k$. Throughout the rest of the paper we will assume that we have a flat local endomorphism
$\varphi:R \lra R$ satisfying, for a given ideal $I\subseteq R$, the condition:

\vskip 2mm

$(\ast)$ The ideals $\{\varphi^t(I)R\}_{t\geq 0}$ form a descending
chain cofinal with the chain $\{I^t\}_{t\geq 0}$.

\vskip 2mm

\noindent Notice that in this case, by the dimension
formula, we have that $\varphi^t(\fM)R$ is $\fM$-primary. The main examples we are going
to consider are:

\vskip 2mm

\begin{itemize}
\item[$\cdot$] {\it Positive characteristic case:} When $R$ contains a field of positive characteristic,
the Frobenius endomorphism $\varphi=F$ satisfies $(\ast)$ for any ideal $I\subseteq R$ (see  \cite{HS93}, \cite{Lyu97}) 
and is flat by the celebrated theorem of E.~Kunz \cite{Kun69}.

\vskip 2mm

\item[$\cdot$] {\it Squarefree monomial ideals case:} Consider the polynomial ring $R=k[x_1,\dots,x_n]$ over a field $k$
in any characteristic. The $k$-linear endomorphism $\varphi(x_i)=x_i^2$ is flat and satisfies $(\ast)$ for any
squarefree monomial ideal (see \cite{Ly84}).
\end{itemize}

\vskip 2mm

G.~Lyubeznik \cite{Lyu97} developed his theory of $F$-modules in positive characteristic building upon these properties 
for the Frobenius map. One may give a slightly extended theory associated to the morphism $\varphi:R\lra R$ instead of the 
Frobenius as follows:

\vskip 2mm

 Let $\varPhi$ be the functor on the category of $R$-modules defined by restriction of scalars. Namely, for any 
 $R$-module $M$, $\varPhi(M)$ is the additive group of $M$ with the usual action of $R$ on the right but regarded 
 as a left $R$-module via $\varphi$.
 Notice that we can also construct the $e$-th iterations $\varPhi^e$ in the usual way.


\vskip 2mm

Let $R[\Theta; \varphi]$ be the skew polynomial ring which is the free left $R$-module $\bigoplus_{e\geq 0} R\Theta^e$
with multiplication $\Theta r= \varphi(r) \Theta$. In fact we have
$$R[\Theta; \varphi]=R\langle \Theta \rangle/\langle \Theta r - \varphi(r) \Theta \hskip 2mm | \hskip 2mm r\in R\rangle.$$

To give a $R[\Theta; \varphi]$-module structure on a $R$-module $\cM$ is equivalent to fix a $R$-linear map
$\theta_{\cM}: \cM \lra \Phi(\cM)$. We say that $\cM$ is a {\it unit $R[\Theta; \varphi]$-module} if $\theta_{\cM}$
is an isomorphism.

\vskip 2mm

Given a finitely generated $R$-module $M$ and a $R$-linear map $\beta: M\lra \varPhi(M)$ one can obtain a
unit $R[\Theta; \varphi]$-module
$$\cM:= {\rm Gen}(M)= \varinjlim (\xymatrix{M \ar[r]^{\beta} & \varPhi(M) \ar[r]^{\varPhi(\beta)}  &
\varPhi^2(M) \ar[r]^{\varPhi^2(\beta)} & \cdots} )$$ just because
$$\varPhi(\cM):= {\rm Gen}(\varPhi(M))= \varinjlim (\xymatrix{\varPhi(M) \ar[r]^{\varPhi(\beta)} &
\varPhi^2(M) \ar[r]^{\varPhi^2(\beta)}  &
\varPhi^3(M) \ar[r]^{\varPhi^3(\beta)} & \cdots} ) = \cM$$
We say that $\cM$ is a  {\it finitely generated unit $R[\Theta; \varphi]$-module} if it can constructed
in this way\footnote{A ($F$-finite) $F$-module
in the sense of G.~Lyubeznik \cite{Lyu97} is a (finitely generated) unit $R[\Theta; F]$-module.}. Moreover, if
the generating morphism $\beta$ is injective, we say that $M$ is a {\it root} of $\cM$. The main example
we are going to consider, that was already treated by A.~Singh and U.~Walther in \cite{SW}, is the case of local cohomology modules.

\vskip 2mm

As it was already stated in \cite{SW}, the flatness of the morphism $\varphi$ implies that $\varPhi$ is an exact functor
and commutes with direct limits. It also follows that $\varPhi^e(\Ext^{i}_R(R/I,R))\cong \Ext^{i}_R(R/\varphi^e(I)R,R)$. We also have a commutative diagram
$$\xymatrix{\dots\ar[r]& \Ext_R^i (R/\varphi^e (I)R,R)\ar[d]\ar[r]& \Ext_R^i (R/\varphi^{e+1} (I)R,R)\ar[d]\ar[r]& \dots\\
\dots\ar[r]& \Phi^e (\Ext_R^i (R/I,R))\ar[r]& \Phi^{e+1} (\Ext_R^i (R/I,R))\ar[r]& \dots}$$
where the maps in the top row are induced by the natural surjection
$$R/\varphi^{e+1}(I)R \lra R/\varphi^{e}(I)R$$ and the vertical maps are isomorphisms. Taking into account property
$(\ast)$, the limit of the top row is the local cohomology module $H_I^i(R)$. We conclude that
local cohomology modules are finitely generated unit  $R[\Theta; F]$-modules and the generating morphism
$$\beta: \Ext^{i}_R(R/I,R) \lra \Ext^{i}_R(R/\varphi(I)R,R)$$ is induced by the natural surjection
$R/\varphi(I)R \lra R/I$.

\begin{remark}
Under this terminology, \cite[Thm. 2.8]{SW} states that $\Ext^{i}_R(R/I,R)$ is a root of $H_I^i(R)$ when
 the induced morphism $\overline{\varphi}:R/I \lra R/I$ is pure.

\end{remark}

\section{Main result}

The description of local cohomology modules given in Section $2$ allows us to obtain
the main result of this note, but first we consider the following vanishing result
for Bass numbers that is a mild generalization of \cite[Thm. 3.3]{HS93}.

\begin{lemma} \label{HS}
 Let $(R,\fM)$ be a regular local ring of dimension $n$ containing a field $k$ and $\varphi:R \lra R$ a flat local endomorphism
 satisfying  $(\ast)$ for an ideal $I \subseteq R$. Given $p,i\in \bN$, if $H_\fM^p(\Ext^{n-i}_R(R/I,R))=0$ then 
$\mu_p(\fM, H^{n-i}_I(R))=0$.
\end{lemma}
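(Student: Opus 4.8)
The plan is to realize $H^{n-i}_I(R)$ as a direct limit built from the Ext module $E := \Ext^{n-i}_R(R/I,R)$ via the generating morphism $\beta : E \to \varPhi(E)$, exactly as set up in Section 2, and then to transport the vanishing hypothesis on $H^p_\fM(E)$ along this limit. First I would recall that $H^{n-i}_I(R) = \varinjlim \bigl( E \xrightarrow{\beta} \varPhi(E) \xrightarrow{\varPhi(\beta)} \varPhi^2(E) \to \cdots \bigr)$, and that $\varPhi$ is exact and commutes with direct limits by flatness of $\varphi$. Since local cohomology $H^p_\fM(-)$ also commutes with direct limits, applying $H^p_\fM$ to the whole directed system gives
\[
H^p_\fM\bigl(H^{n-i}_I(R)\bigr) \;=\; \varinjlim\Bigl( H^p_\fM(E) \xrightarrow{} H^p_\fM(\varPhi(E)) \xrightarrow{} H^p_\fM(\varPhi^2(E)) \to \cdots \Bigr).
\]

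Next I would identify each term in this limit. Because $\varphi$ is flat, we have the isomorphism $\varPhi^e(\Ext^{n-i}_R(R/I,R)) \cong \Ext^{n-i}_R(R/\varphi^e(I)R,R)$ noted in Section 2; moreover $\varPhi$, being the flat base change along $\varphi$, commutes with the local cohomology functor $H^p_\fM(-)$ up to the analogous twist, so $H^p_\fM(\varPhi^e(E)) \cong \varPhi^e\bigl(H^p_\fM(E)\bigr)$. Here one uses that $\varphi^t(\fM)R$ is $\fM$-primary (the remark after $(\ast)$ in the paper), so that $H^p_{\varphi^e(\fM)R}(-) = H^p_\fM(-)$ and the base-change/flat-change formula for local cohomology applies. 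Consequently, if $H^p_\fM(E) = 0$, then every term $H^p_\fM(\varPhi^e(E)) \cong \varPhi^e(H^p_\fM(E)) = 0$ vanishes, hence the direct limit vanishes: $H^p_\fM(H^{n-i}_I(R)) = 0$. By the Theorem/Definition of Lyubeznik numbers in the introduction, $\mu_p(\fM, H^{n-i}_I(R)) = \mu_0(\fM, H^p_\fM(H^{n-i}_I(R)))$, and the latter is $0$ once the module itself is $0$. This gives the claim.

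The main obstacle I expect is the careful justification that $H^p_\fM$ commutes with the restriction-of-scalars functor $\varPhi$ — i.e. the isomorphism $H^p_\fM(\varPhi(N)) \cong \varPhi(H^p_\fM(N))$ for a finitely generated $R$-module $N$. This is where flatness of $\varphi$ together with the $\fM$-primariness of $\varphi(\fM)R$ is essential: one computes $H^p_\fM(-)$ via the Čech complex on a system of parameters, or via the stable value of $\Ext^p_R(R/\fM^{[t]}, -)$, and uses that tensoring/$\Hom$ against these finite-length quotients is compatible with the flat map $\varphi$. An alternative, perhaps cleaner, route that sidesteps part of this is the one implicitly suggested by the citation of \cite[Thm. 3.3]{HS93}: work at the finite level with the Ext modules $\Ext^{n-i}_R(R/\varphi^e(I)R,R)$ themselves (these are honest finitely generated modules), show $H^p_\fM$ of each is zero, and then pass to the limit — the compatibility diagram at the end of Section 2 shows the limit of the Ext system is $H^{n-i}_I(R)$, and $H^p_\fM$ commutes with that limit. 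Either way, the heart of the argument is the exactness and limit-compatibility of $\varPhi$ combined with the fact that the hypothesis $H^p_\fM(E)=0$ propagates through the directed system; everything else is bookkeeping with Section 2's setup.
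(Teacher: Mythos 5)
Your proof is correct and follows essentially the same route as the paper: realize $H^{n-i}_I(R)$ as $\varinjlim \varPhi^e(E)$ with $E=\Ext^{n-i}_R(R/I,R)$, use flat base change together with the $\fM$-primariness of $\varphi^t(\fM)R$ to get $H^p_\fM(\varPhi^e(E))\cong\varPhi^e(H^p_\fM(E))$, and pass through the direct limit, which $H^p_\fM$ commutes with. The extra discussion you give of why $H^p_\fM$ commutes with $\varPhi$, and the alternative at the finite Ext level, are correct elaborations of the same argument rather than a different proof.
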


\begin{proof}
 Using flat base change for local cohomology and the fact that $\varphi^t(\fM)R$ is $\fM$-primary we have:
$$\varPhi^e(H_\fM^p(\Ext^{n-i}_R(R/I,R)))\cong H_\fM^p(\varPhi^e(\Ext^{n-i}_R(R/I,R)))\cong H_\fM^p(\Ext^{n-i}_R(R/\varphi^e(I)R,R)).$$
Therefore, since local cohomology commutes with direct limits
$$H_\fM^p(H^{n-i}_I(R)) \cong H_\fM^p(\varinjlim \varPhi^e(\Ext^{n-i}_R(R/I,R)))\cong
\varinjlim  \varPhi^e(H_\fM^p(\Ext^{n-i}_R(R/I,R)))$$ we get the desired result.
\end{proof}

\begin{theorem} \label{main}
Let $(R,\fM)$ be a regular local ring of dimension $n$ containing a field $k$ and $\varphi:R \lra R$ a flat local endomorphism
 satisfying  $(\ast)$ for an ideal $I \subseteq R$ such that 
$R/I$ is sequentially Cohen-Macaulay. Then the Lyubeznik table of $R/I$ is trivial.
\end{theorem}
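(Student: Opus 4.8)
The plan is to combine the homological characterization of sequentially Cohen-Macaulay rings (Theorem/Definition in the Introduction) with Lemma \ref{HS} and the Euler characteristic formula (Proposition). By hypothesis, for every $0\leq i \leq d:=\dim R/I$ the deficiency module $K^i(R/I)=\Ext^{n-i}_R(R/I,R)$ is either zero or Cohen-Macaulay of dimension exactly $i$. Now for a Cohen-Macaulay $R$-module of dimension $i$ the only nonvanishing local cohomology with support in $\fM$ sits in cohomological degree $i$; hence $H_\fM^p(\Ext^{n-i}_R(R/I,R))=0$ whenever $p\neq i$ (this includes the case where the Ext module is zero). Applying Lemma \ref{HS} we conclude $\mu_p(\fM,H^{n-i}_I(R))=0$ for all $p\neq i$, i.e.\ $\lambda_{p,i}(R/I)=0$ whenever $p\neq i$. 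So the Lyubeznik table is supported on its main diagonal, and in fact $\lambda_{p,i}(R/I)=e\big(H^i_\fM(\Ext^{n-i}_R(R/I,R))\big)$ is the $D_{R|k}$-module length of a single local cohomology module.

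It remains to show $\lambda_{i,i}(R/I)=0$ for $i<d$ and $\lambda_{d,d}(R/I)=1$. For this I would feed the diagonal vanishing into the Euler characteristic formula: $\sum_{0\leq p,i\leq d}(-1)^{p-i}\lambda_{p,i}(R/I)=1$ collapses to $\sum_{i=0}^{d}\lambda_{i,i}(R/I)=1$ (every surviving term has $p=i$, so the sign is $(-1)^0=1$). Since each $\lambda_{i,i}(R/I)$ is a nonnegative integer and property (iii) gives $\lambda_{d,d}(R/I)\geq 1$, the only possibility is $\lambda_{d,d}(R/I)=1$ and $\lambda_{i,i}(R/I)=0$ for all $i<d$. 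This yields precisely the trivial Lyubeznik table.

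The one point that needs care—and which I expect to be the main (though modest) obstacle—is verifying that Lemma \ref{HS} genuinely applies with the degree bookkeeping matching up: the lemma is phrased for $H^{n-i}_I(R)$ against $\Ext^{n-i}_R(R/I,R)$, and one must ensure the index $i$ in the sequentially Cohen-Macaulay hypothesis is the same $i$ as in the definition of $\lambda_{p,i}$, so that ``$\Ext^{n-i}$ is zero or Cohen-Macaulay of dimension $i$'' translates into ``$H^p_\fM$ of it vanishes for $p\neq i$.'' Once that is checked, everything is formal. A secondary subtlety: the Euler characteristic formula as stated already presupposes the $D_{R|k}$-module structure and finite length of the relevant modules, which is legitimate here since $R$ is regular containing a field and the $\varphi$ we use (Frobenius, or $x_i\mapsto x_i^2$ for squarefree monomial ideals) puts us in the setting where local cohomology and its iterated local cohomology are holonomic; I would simply invoke this as already established in the Introduction rather than re-prove it.
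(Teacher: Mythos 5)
Your proof is correct and follows essentially the same route as the paper: translate the sequentially Cohen--Macaulay hypothesis into vanishing of $H^p_\fM(\Ext^{n-i}_R(R/I,R))$ for $p\neq i$, push that through Lemma~\ref{HS} to kill all off-diagonal Lyubeznik numbers, and then invoke the Euler characteristic formula together with property (iii) to force the trivial table. One small inaccuracy in a side remark (which plays no role in the argument): the identification $\lambda_{i,i}(R/I)=e\bigl(H^i_\fM(\Ext^{n-i}_R(R/I,R))\bigr)$ is not justified by Lemma~\ref{HS}, whose proof only exhibits $H^i_\fM(H^{n-i}_I(R))$ as the direct limit $\varinjlim \varPhi^e\bigl(H^i_\fM(\Ext^{n-i}_R(R/I,R))\bigr)$, and the length of a direct limit need not equal the length of the initial term.
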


\begin{proof}
If $R/I$ is sequentially Cohen-Macaulay then we have that $\Ext^{n-i}_R(R/I,R)$ is zero or Cohen-Macaulay of dimension $i$.
Therefore $H_\fM^p(\Ext^{n-i}_R(R/I,R))=0$ for all $p\neq i$. It follows from Lemma \ref{HS} that the possible non-zero Lyubeznik numbers are
$\lambda_{i,i}(R/I)$, i.e. those in the main diagonal of the Lyubeznik table. Using the Euler characteristic formula
for Lyubeznik numbers and property (iii) we have $\lambda_{0,0} +\cdots + \lambda_{d,d}=1$ and $\lambda_{d,d}\neq 0$ so we must have a trivial Lyubeznik table.
\end{proof}

\begin{remark}
The completion with respect to the maximal ideal of a sequentially Cohen-Macaulay ring is sequentially Cohen-Macaulay \cite[Thm. 4.9]{Sch99} but
the converse does not hold as P. Schenzel showed in \cite[Ex. 6.1]{Sch99} using Nagata's example \cite[Ex.2]{Na62}.
Lyubeznik numbers does not depend on the completion so we can just assume that the completion of $R/I$ is sequentially
Cohen-Macaulay in the hypothesis of Theorem \ref{main}.
\end{remark}

Specializing to the cases considered at the beginning of Section $2$ we obtain:

\begin{corollary}
Let $(R,\fM)$ be a regular local ring containing a field $k$. Then the Lyubeznik table of $R/I$ is trivial
in the following cases:
\begin{itemize}
\item[$\cdot$] $R/I$ is sequentially Cohen-Macaulay and contains a field of positive characteristic.
\item[$\cdot$] $R/I$ is sequentially Cohen-Macaulay and $I$ is a squarefree monomial ideal.
\end{itemize}

\end{corollary}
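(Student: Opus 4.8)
The plan is to derive the corollary directly from Theorem~\ref{main} by checking, in each of the two cases, that the hypotheses of that theorem are met --- namely, that there exists a flat local endomorphism $\varphi:R\lra R$ satisfying condition $(\ast)$ for the ideal $I$. Once that is in place, the conclusion (triviality of the Lyubeznik table of $R/I$) is immediate. So the only work is to exhibit the endomorphism in each situation, and both choices were already flagged at the beginning of Section~2.

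\medskip

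First I would treat the positive characteristic case. Since $R/I$ contains a field of positive characteristic $p>0$, so does $R$, and we take $\varphi=F$ to be the Frobenius endomorphism $r\mapsto r^p$. By Kunz's theorem \cite{Kun69}, $F$ is flat precisely because $R$ is regular. Moreover, for any ideal $I\subseteq R$ one has $F^t(I)R=I^{[p^t]}$, and the chains $\{I^{[p^t]}\}_{t\geq 0}$ and $\{I^t\}_{t\geq 0}$ are cofinal (as recalled in \cite{HS93},\cite{Lyu97}), so $(\ast)$ holds. Thus Theorem~\ref{main} applies and the Lyubeznik table of $R/I$ is trivial.

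\medskip

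Next I would handle the squarefree monomial ideal case. Here we are in the polynomial ring $R=k[x_1,\dots,x_n]$ (localized or completed at $\fM=(x_1,\dots,x_n)$) over a field $k$ of arbitrary characteristic, and $I$ is generated by squarefree monomials. Take $\varphi$ to be the $k$-algebra endomorphism determined by $\varphi(x_i)=x_i^2$. This map is flat --- $R$ is a free module over its image $k[x_1^2,\dots,x_n^2]$ with basis the squarefree monomials --- and, as observed by Lyubeznik in \cite{Ly84}, for a squarefree monomial ideal $I$ one has $\varphi^t(I)R=I^{[2^t]}$, whose radical is again $I$; the chain $\{I^{[2^t]}\}_{t\geq 0}$ is then cofinal with $\{I^t\}_{t\geq 0}$, so $(\ast)$ holds. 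Again Theorem~\ref{main} gives the triviality of the Lyubeznik table. The only mild subtlety --- the ``hard part,'' though it is routine --- is to make sure the setting matches the ``regular \emph{local}'' hypothesis of Theorem~\ref{main}: one passes to the completion $\widehat{R}=k[[x_1,\dots,x_n]]$ and notes that $\varphi$ extends continuously, that flatness and $(\ast)$ are preserved, and that Lyubeznik numbers are unchanged under completion, so no generality is lost. After these verifications the corollary follows at once.
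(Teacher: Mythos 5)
Your proof is correct and is essentially the same as the paper's: the paper simply says the corollary follows by ``specializing to the cases considered at the beginning of Section 2,'' which are precisely the Frobenius map in positive characteristic and the endomorphism $x_i\mapsto x_i^2$ for squarefree monomial ideals, both of which you verify satisfy the hypotheses of Theorem~\ref{main}. Your extra remark about passing to the completion to match the regular local setting is a reasonable clarification of a point the paper leaves implicit.
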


\begin{remark}

As it was already pointed out in \cite{AV11}, the converse statement does not hold. For example consider
the ideal in $k[\![x_1,\dots,x_9]\!]$:

\vskip 2mm

$I=(x_1,x_2)\cap(x_3,x_4)\cap (x_5,x_6) \cap (x_7,x_8)\cap (x_9,x_1)\cap (x_9,x_2)\cap (x_9,x_3)\cap (x_9,x_4)\cap (x_9,x_5)\cap $

$\hskip .8cm  \cap (x_9,x_6)\cap (x_9,x_7)\cap (x_9,x_8).$

\vskip 2mm

\noindent $R/I$  has a trivial Lyubeznik table but is not sequentially Cohen-Macaulay.
We remark that  $H^r_I(R)$ does not vanish for
$r=2,3,4,5$.

\end{remark}

\section{Some partial vanishing results}
A way to measure the deviation of $R/I$ from being Cohen-Macaulay is through 
the {\it deficiency modules} $$K^i(R/I):=\Ext^{n-i}_R(R/I,R).$$ Notice that for $d=\dim R/I$ we have that $K^d(R/I)$ 
is nothing but the canonical module. In this sense, sequentially Cohen-Macaulay rings 
form a class where these deficiency modules are well understood. The methods developed in the previous section suggest that 
some configurations of Lyubeznik tables could be described depending on the behavior of these modules.

\vskip 2mm

In this direction we recall the following notion developed by P.~Schenzel in \cite{Sch04}: We say that 
$R/I$ is {\it canonically Cohen-Macaulay}  (CCM for short) if the canonical module 
$K^d(R/I)$ is Cohen-Macaulay.

\begin{proposition}
Let $(R,\fM)$ be a regular local ring of dimension $n$ containing a field $k$ and $\varphi:R \lra R$ a flat local endomorphism
 satisfying  $(\ast)$ for an ideal $I \subseteq R$ such that
$R/I$ is canonically Cohen-Macaulay. Then, $\lambda_{i,d}(R/I)=0$ for all $i< d$.
\end{proposition}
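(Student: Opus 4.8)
The plan is to mimic the proof of Theorem~\ref{main}, but now extracting information only from the top deficiency module $K^d(R/I) = \Ext^{n-d}_R(R/I,R)$ rather than from all of them. The Lyubeznik numbers $\lambda_{i,d}(R/I)$ are by definition $\mu_i(\fM, H^{n-d}_I(R))$, and the relevant $\Ext$ module governing $H^{n-d}_I(R)$ via the root description of Section~2 is precisely $\Ext^{n-d}_R(R/I,R) = K^d(R/I)$. So the whole argument should funnel through Lemma~\ref{HS} applied with $i = d$.

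First I would invoke the hypothesis: $R/I$ canonically Cohen-Macaulay means $K^d(R/I)$ is a Cohen-Macaulay module. Its dimension is $d$ (it is the canonical module of $R/I$, which has dimension $d$; this is standard, and one can also note $\dim \Ext^{n-d}_R(R/I,R) \leq d$ always, with equality because it is nonzero — indeed $H^{n-d}_I(R) = H^{n-d}_\fM$-stuff forces it, or simply because the canonical module is nonzero). Being Cohen-Macaulay of dimension $d$, we get $H^p_\fM(K^d(R/I)) = 0$ for all $p \neq d$. In particular $H^p_\fM(\Ext^{n-d}_R(R/I,R)) = 0$ for every $p < d$.

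Next, Lemma~\ref{HS} with $i = d$ and any $p < d$ gives $\mu_p(\fM, H^{n-d}_I(R)) = 0$, i.e. $\lambda_{p,d}(R/I) = 0$ for all $p < d$. Translating indices back to the statement (which uses $i$ for the first subscript): $\lambda_{i,d}(R/I) = 0$ for all $i < d$. That is exactly the claim, so the proof is essentially immediate once one has the right input into Lemma~\ref{HS}.

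\textbf{Main obstacle.} There isn't a deep obstacle here — the content is really in Lemma~\ref{HS} and the identification $K^d(R/I) = \Ext^{n-d}_R(R/I,R)$, both already available. The only point needing a word of care is justifying that the canonical module has dimension exactly $d$ so that Cohen-Macaulayness yields concentration of $H^\bullet_\fM$ in degree $d$ (not merely $\leq d$); this follows because $\Ext^{n-d}_R(R/I,R)$ is nonzero (e.g. it surjects onto or is related to the top nonvanishing local cohomology, or one cites the standard fact that the canonical module of a $d$-dimensional ring has dimension $d$). After that, it's a one-line application of Lemma~\ref{HS}.
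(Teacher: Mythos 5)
Your proof is correct and follows essentially the same route as the paper: both reduce the claim to Lemma~\ref{HS} applied with $i=d$, using that the canonical module $K^d(R/I)=\Ext^{n-d}_R(R/I,R)$ is Cohen-Macaulay of dimension $d$, hence has $H^p_\fM$ concentrated in degree $d$. The only difference is that where you argue informally that $\dim K^d(R/I)=d$, the paper simply cites \cite[Prop.~2.3]{Sch04} for this fact.
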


\begin{proof}
The canonical module $K^d(R/I)=\Ext^{n-d}_R(R/I,R)$  is Cohen-Macaulay of dimension $d$  by \cite[Prop. 2.3]{Sch04}, so the result 
follows from Lemma \ref{HS}.
\end{proof}

For a general description of the highest Lyubeznik number we refer to \cite{Lyu06}, \cite{Zha07} 
where $\lambda_{d,d}(R/I)$ is described as the number of connected components of the Hochster-Huneke graph of 
the completion of the strict Henselianization of $R/I$. 

\vskip 2mm

Examples of CCM modules include Cohen-Macaulay and sequentially Cohen-Macaulay modules among others (see \cite[Ex.3.2]{Sch04}).
Using Theorem \ref{main} we have that $\lambda_{d,d}(R/I)=1$ in these cases but, of course, we may find examples
of CCM rings where this number is larger. For instance,
the ideal $I=(x_1,x_2)\cap(x_3,x_4)$ in $k[\![x_1,x_2,x_3,x_4]\!]$ satisfies that $R/I$ is CCM and its Lyubeznik table is
$$\Lambda(R/I)  = \left(
                    \begin{array}{ccc}
                      0 & 1 & 0  \\
                       & 0 & 0 \\
                       &  & 2 \\
                    \end{array}
                  \right)
$$
This example can be seen as a particular case of the following result.

\begin{proposition}
Let $(R,\fM)$ be a regular local ring of dimension $n$ containing a field $k$ and $\varphi:R \lra R$ a flat local endomorphism
 satisfying  $(\ast)$ for an ideal $I \subseteq R$ such that 
$R/I$ is unmixed and $\depth K^i(R/I)\geq i-1$ for $0\leq i < d$. Then, its Lyubeznik table is of the form
$$\Lambda(R/I)  = \left(
                    \begin{array}{ccccc}
                      0 & \la_{0,1} &\cdots & 0 & 0  \\
                       & 0  &\cdots & 0 & 0  \\
                        &   &\ddots & \vdots & \vdots  \\
                       & & & \la_{d-2,d-1}& 0 \\
                       & & & 0 & 0 \\
                       & & & & \la_{d,d} \\
                    \end{array}
                  \right)
$$
where $ \la_{0,1} + \cdots + \la_{d-2,d-1}= \la_{d,d}-1$. In particular, the Lyubeznik table is trivial when the highest 
Lyubeznik number is $1$.

\end{proposition}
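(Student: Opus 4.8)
The plan is to combine the vanishing statements from Lemma \ref{HS} (applied to the hypothesis on the depths of the deficiency modules) with the Euler characteristic formula and property (iii). The strategy mirrors exactly the proof of Theorem \ref{main}, except that now the deficiency modules $K^i(R/I) = \Ext^{n-i}_R(R/I,R)$ are no longer Cohen-Macaulay for $i<d$, only of depth at least $i-1$. The key point is that for a module $N$ of dimension $\le i$ and depth $\ge i-1$, the local cohomology $H^p_{\fM}(N)$ vanishes except possibly for $p \in \{i-1, i\}$. Feeding this into Lemma \ref{HS} tells us that the only Lyubeznik numbers that can be nonzero are $\lambda_{i,i}(R/I)$ and $\lambda_{i-1,i}(R/I)$ — that is, the main diagonal and the one immediately above it.

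**First I would** pin down the dimensions of the $K^i(R/I)$. Since $R/I$ is unmixed of dimension $d$, one has $\dim K^d(R/I) = d$, and for $i < d$ the standard bound is $\dim K^i(R/I) \le i$. Combined with $\depth K^i(R/I) \ge i-1$, this forces $H^p_{\fM}(K^i(R/I)) = 0$ for all $p \notin \{i-1, i\}$ when $i < d$, and $H^p_{\fM}(K^d(R/I)) = 0$ for $p \neq d$ would require knowing $K^d$ is CM — but we do not assume that here, so actually one only gets $H^p_{\fM}(K^d(R/I)) = 0$ for $p > d$ and $p < \depth K^d(R/I)$. I need to check whether unmixedness alone, via Lemma \ref{HS} and property (ii), already kills everything in column $d$ except $\lambda_{d-1,d}$ and $\lambda_{d,d}$; in fact, for the claimed table shape we need $\lambda_{i,d}(R/I) = 0$ for $i \le d-2$ as well, so I must argue that $\depth K^d(R/I) \ge d-1$. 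This follows because $R/I$ unmixed implies $K^d(R/I)$ satisfies $S_2$ (it is the canonical module, which is always $S_2$), hence $\depth K^d(R/I) \ge \min(2, d) \ge d - 1$ once... wait, that only gives $\ge 2$, not $\ge d-1$. I would instead invoke that for the canonical module $\depth K^d(R/I) \ge 2$ when $d \ge 2$, and handle the column $d$ entries via a separate argument, or simply note that the hypothesis implicitly should be read as covering $i = d$ too via $S_2$-ness; the cleanest route is to cite that $K^d$ being $S_2$ of dimension $d$ gives $H^p_\fM(K^d) = 0$ for $p < 2$, combined with property (ii) $\lambda_{p,i} = 0$ for $p > i$ — but this still leaves $\lambda_{2,d}, \dots, \lambda_{d-2,d}$ potentially nonzero. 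The resolution must be that the intended hypothesis or a known result forces $\depth K^d(R/I) \ge d-1$; I will assume this and flag it.

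**Granting the depth/dimension analysis**, the argument concludes quickly. By Lemma \ref{HS}, the only possibly nonzero Lyubeznik numbers are $\lambda_{i,i}(R/I)$ for $0 \le i \le d$ and $\lambda_{i-1,i}(R/I)$ for $1 \le i \le d$. Now apply the Euler characteristic formula $\sum_{p,i}(-1)^{p-i}\lambda_{p,i}(R/I) = 1$ from Proposition 1.2. The diagonal terms $\lambda_{i,i}$ contribute with sign $(-1)^0 = +1$, and the off-diagonal terms $\lambda_{i-1,i}$ contribute with sign $(-1)^{-1} = -1$. So the formula reads
$$\sum_{i=0}^{d} \lambda_{i,i}(R/I) - \sum_{i=1}^{d} \lambda_{i-1,i}(R/I) = 1.$$
To get the stated table one also needs the diagonal entries $\lambda_{i,i}$ for $i < d$ to vanish. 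This is where I would use that $K^i(R/I)$ for $i < d$ has dimension $< i$ whenever $R/I$ is unmixed and $i < d$ — actually $\dim K^i(R/I) \le i$ in general, so one needs $\dim K^i(R/I) \le i - 1$, equivalently $H^i_\fM(K^i(R/I)) = 0$. For an unmixed ring this holds: unmixedness means all associated primes of $R/I$ have the same dimension $d$, which forces the lower deficiency modules to have dimension strictly less than their index. Then Lemma \ref{HS} kills $\lambda_{i,i}(R/I)$ for $i < d$, leaving only $\lambda_{d,d}$ on the diagonal, and the Euler formula becomes $\lambda_{d,d} - (\lambda_{0,1} + \cdots + \lambda_{d-2,d-1}) = 1$ once we also note $\lambda_{d-1,d} = 0$ (from $\depth K^d \ge d-1$ giving $H^{d-1}_\fM(K^d) = 0$). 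Rearranging gives $\lambda_{0,1} + \cdots + \lambda_{d-2,d-1} = \lambda_{d,d} - 1$, and triviality when $\lambda_{d,d} = 1$ is immediate since all summands are nonnegative.

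**The main obstacle** I anticipate is the bookkeeping around column $d$ and the diagonal below the top: making precise, from unmixedness plus the depth hypothesis on $K^i$ for $i < d$, that (a) $\dim K^i(R/I) \le i-1$ for $i < d$ so the diagonal below $(d,d)$ vanishes, and (b) $\depth K^d(R/I) \ge d-1$ so the entries $\lambda_{i,d}$ vanish for $i \le d-1$. Part (b) is the delicate one: the canonical module is only guaranteed $S_2$, giving $\depth \ge 2$, so for $d \ge 4$ one genuinely needs more — either a sharper known estimate on $\depth K^d$ for unmixed rings, or the observation that the hypothesis "$\depth K^i(R/I) \ge i - 1$ for $0 \le i < d$" should be understood to propagate, or that $\lambda_{i,d} = 0$ for $i \le d-2$ can be extracted from Proposition 4.1-type reasoning. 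I would resolve this by checking whether $R/I$ unmixed implies the canonical module, viewed over the quotient $R/\operatorname{Ann}(K^d)$, has the right depth, and if not, I would weaken the claimed table shape accordingly or strengthen the hypothesis to include $i = d$.
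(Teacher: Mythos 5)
Your overall strategy is the same as the paper's: use Lemma \ref{HS} to turn depth/dimension bounds on the deficiency modules $K^i(R/I)$ into vanishing of Lyubeznik numbers, then invoke the Euler characteristic formula and property (iii) to pin down the relation $\la_{0,1}+\cdots+\la_{d-2,d-1}=\la_{d,d}-1$. That part is correct and identical in spirit to Theorem \ref{main}. The computation of signs in the Euler characteristic is also right.

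However, there is a genuine gap, and you flagged it yourself: you cannot establish from first principles that (a) $\dim K^i(R/I)\le i-1$ for $i<d$, and especially (b) that the canonical module $K^d(R/I)$ is Cohen-Macaulay. The paper does not attempt to derive these from scratch; it cites Schenzel's theorem \cite[Thm.\ 4.4]{Sch04}, which states precisely that under the hypotheses ``$R/I$ unmixed and $\depth K^i(R/I)\geq i-1$ for $0\le i<d$'', the ring $R/I$ is canonically Cohen--Macaulay (so $K^d$ is CM of dimension $d$) and each $K^i$ for $i<d$ is either zero or Cohen--Macaulay of dimension $i-1$. These are exactly the facts needed to feed into Lemma \ref{HS} and kill all Lyubeznik numbers except $\la_{i-1,i}$ for $i<d$ and $\la_{d,d}$. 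Without that theorem, as you correctly observe, $S_2$-ness of the canonical module only gives $\depth K^d\ge 2$, which is not enough when $d\ge 4$; so the column-$d$ vanishing does not follow from your reasoning alone. There is also a small slip in your final paragraph: you write that $\depth K^d\ge d-1$ kills $H^{d-1}_{\fM}(K^d)$, but depth $\ge d-1$ only kills $H^p_{\fM}$ for $p<d-1$; you actually need $\depth K^d\ge d$, i.e., $K^d$ Cohen--Macaulay, which is what Schenzel's theorem provides. In short, the argument you sketched is incomplete precisely at the point where the paper imports a nontrivial external theorem.
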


\begin{proof}
By \cite[Thm.4.4]{Sch04} we have that $R/I$ is CCM and  $ K^i(R/I)$ is either zero or Cohen-Macaulay of dimension $i-1$.
Then, using Lemma \ref{HS} we get the desired Lyubeznik table. The rest of the proof follows from the Euler characteristic 
property  of Lyubeznik numbers.
\end{proof}

Another large class of CCM rings discussed in \cite{Sch04} is the case of simplicial affine semigroup rings (see \cite[Thm. 6.4]{Sch04}). 
Let $S$ be a  finitely generated submonoid of $\bN^r$. The affine semigroup $k[S]$ of $S$ over $k$ is the subring of 
$k[x_1,\dots,x_r]$  generated
by all monomials $x^s:=x_1^{s_1}\cdots x_r^{s_r}$, $s\in S$. Equivalently, if $n$ is the minimal number of generators of $S$, we 
may write $k[S]=R/I(S)$ where $R=k[x_1,\dots,x_n]$  and $I(S)$  is the ideal of vanishing of $k[S]$. 
We say that $S$ is simplicial if there is a homogeneous
system of parameters of $k[S]$ with $d=\dim k[S]$ elements.

\begin{proposition}
Let $k[S]$ be a simplicial affine semigroup ring of codimension $2$, i.e. $n=d+2$ and 
$\varphi:R \lra R$ a flat endomorphism satisfying  $(\ast)$ for the ideal of vanishing $I(S)\subseteq R$ of $k[S]$.
If the number of generators of this ideal is $m\leq 3$, its Lyubeznik table is trivial. Otherwise it is of the form
$$\Lambda(k[S])  = \left(
                    \begin{array}{cccc}
                      0 & \cdots & 0 & 0  \\
                    
                           &\ddots & \vdots & \vdots  \\
                       & & \la_{d-2,d-1}& 0 \\
                        & & 0 & 0 \\
                        & & & \la_{d,d} \\
                    \end{array}
                  \right)$$
where $\la_{d-2,d-1}= \la_{d,d}-1$. 

\end{proposition}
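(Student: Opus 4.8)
The statement concerns a simplicial affine semigroup ring $k[S] = R/I(S)$ of codimension $2$, and the claim is a dichotomy based on the minimal number of generators $m$ of the ideal $I(S)$. The overall strategy is to reduce everything to Lemma~\ref{HS} plus the Euler characteristic formula, using structural results on the deficiency modules $K^i(k[S])$ that are available for codimension-$2$ simplicial semigroup rings. First I would recall that simplicial affine semigroup rings are CCM by \cite[Thm.~6.4]{Sch04}, so the canonical module $K^d(k[S])$ is Cohen-Macaulay of dimension $d$; by the Proposition on CCM rings this already kills $\lambda_{i,d}(k[S])$ for all $i<d$, giving the vanishing of the last column except for $\lambda_{d,d}$.

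The heart of the argument is to control the intermediate deficiency modules $K^i(k[S])$ for $0 < i < d$. Since $\operatorname{codim} I(S) = 2$, the ring $R/I(S)$ has depth at least $1$ (it is a domain, hence even depth $\geq 1$), and one knows that $\Ext^{n-i}_R(R/I(S),R) = K^i(k[S]) = 0$ for $i$ outside a narrow range. More precisely, the relevant range of $i$ for which $K^i$ can be nonzero is $d-1 \leq i \leq d$: indeed $\operatorname{pd}_R R/I(S) \leq n - \operatorname{depth} R/I(S)$, and in codimension $2$ with $R/I(S)$ Cohen-Macaulay one would get only $K^d \neq 0$, while the defect from Cohen-Macaulayness is concentrated in $K^{d-1}$. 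This is where the hypothesis $m \leq 3$ enters: if $I(S)$ is generated by at most $3$ elements, then by the Hilbert-Burch theorem (a codimension-$2$ perfect ideal with $\leq 3$ generators, or rather the resolution being short enough) $R/I(S)$ is actually Cohen-Macaulay, so $H^r_I(R) = 0$ for $r \neq 2$ and the Lyubeznik table is trivial as in the introduction. If $m \geq 4$, then $R/I(S)$ fails to be Cohen-Macaulay, but \cite[Thm.~6.4]{Sch04} (or the surrounding analysis of simplicial semigroup rings of codimension $2$) tells us $K^{d-1}(k[S])$ is either zero or Cohen-Macaulay of dimension exactly $d-2$. Applying Lemma~\ref{HS} to $K^{d-1}$ then forces $H^p_\fM(\Ext^{n-(d-1)}_R(R/I,R)) = 0$ for $p \neq d-2$, so $\lambda_{p,d-1}(k[S]) = 0$ except possibly for $p = d-2$; combined with the vanishing of $K^i$ for $i < d-1$, every Lyubeznik number vanishes except $\lambda_{d-2,d-1}$ and $\lambda_{d,d}$, which is exactly the displayed shape.

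Finally, the numerical relation $\lambda_{d-2,d-1} = \lambda_{d,d} - 1$ follows purely formally from Proposition~1.3 (the Euler characteristic formula $\sum (-1)^{p-i}\lambda_{p,i} = 1$): with only the two surviving entries $\lambda_{d-2,d-1}$ (contributing sign $(-1)^{(d-2)-(d-1)} = -1$) and $\lambda_{d,d}$ (contributing $+1$), the formula reads $\lambda_{d,d} - \lambda_{d-2,d-1} = 1$. The main obstacle I anticipate is pinning down precisely the statement about $K^{d-1}(k[S])$ for simplicial codimension-$2$ semigroup rings — i.e.\ that it is Cohen-Macaulay of dimension $d-2$ when nonzero, and that $m \leq 3$ is exactly the Cohen-Macaulay threshold; this is really a matter of correctly invoking \cite[Thm.~6.4]{Sch04} (and perhaps the Hilbert-Burch / Auslander-Buchsbaum bookkeeping for the $m \leq 3$ case) rather than any new computation, so the proof itself should be short once that citation is set up correctly.
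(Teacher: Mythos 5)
Your overall strategy coincides with the paper's: reduce to Lemma~\ref{HS} via the structure of the deficiency modules $K^i(k[S])$, then close with the Euler characteristic formula. Your explicit sign computation showing $\lambda_{d,d} - \lambda_{d-2,d-1} = 1$ is correct (the paper leaves this last step implicit, but it is the same reasoning used in the preceding proposition). However, there is a genuine gap in the middle of your argument. You try to derive the vanishing $K^i(k[S]) = 0$ for $i < d-1$ from Auslander--Buchsbaum plus the observation that $k[S]$ is a domain, hence of depth $\geq 1$. That only gives $\pd_R k[S] \leq n-1$, which permits $K^i \neq 0$ for all $1 \leq i \leq d$; the assertion that the non-Cohen-Macaulay defect is ``concentrated in $K^{d-1}$'' is exactly the claim that $\depth k[S] \geq d-1$, and this does not follow from what you have written. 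This vanishing, together with the fact that $K^{d-1}(k[S])$ is either zero or Cohen-Macaulay of dimension $d-2$, and the equivalence ``$k[S]$ Cohen-Macaulay $\Leftrightarrow$ $m \leq 3$,'' are all part of a single structural result of Schenzel for simplicial semigroup rings of codimension $2$; the paper cites \cite[Thm.~6.5]{Sch04} for this, not Thm.~6.4 (which is the more general CCM statement and only controls the last column). So your Hilbert--Burch detour for the $m \leq 3$ case and your depth heuristic for the $m > 3$ case should both be replaced by the single invocation of \cite[Thm.~6.5]{Sch04}; once that citation is in place the rest of your argument goes through.
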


\begin{proof}
By \cite[Thm.6.5]{Sch04} we have that $k[S]$ is Cohen-Macaulay if and only if $m\leq 3$. When $m>3$ we have that
 $ K^i(k[S])=0$  for all $0\leq i<d-1$ and $ K^{d-1}(k[S])$ is either zero or Cohen-Macaulay of dimension $d-2$.
Then, using Lemma \ref{HS} we get the desired Lyubeznik table. 
\end{proof}

{\bf Aknowledgement: } The author benefited from conversations with K.~Yanagawa. He also thanks the anonymous referee
for a careful reading and useful comments.

\end{document}